\documentclass[12pt]{amsart}
\usepackage{amsmath,amssymb,amsbsy,amsfonts,latexsym,amsopn,amstext,cite,
                                               amsxtra,euscript,amscd,bm}
\usepackage{url}

\usepackage{mathrsfs}

\usepackage{color}
\usepackage[colorlinks,linkcolor=blue,anchorcolor=blue,citecolor=blue,backref=page]{hyperref}
\usepackage{color}
\usepackage{graphics,epsfig}
\usepackage{graphicx}
\usepackage{float}
\usepackage{epstopdf}
\hypersetup{breaklinks=true}

\usepackage[np]{numprint}
\npdecimalsign{\ensuremath{.}}

\usepackage{bibentry}

\usepackage[english]{babel}
\usepackage{mathtools}
\usepackage{todonotes}
\usepackage{url}
\usepackage[colorlinks,linkcolor=blue,anchorcolor=blue,citecolor=blue,backref=page]{hyperref}

\usepackage[norefs,nocites]{refcheck}

\def\Degf{e}

\def\cont{\mathrm{cont}}

\begin{document}

\newcommand{\bs}{\boldsymbol}
\def \a{\alpha} \def \b{\beta} \def \d{\delta} \def \e{\varepsilon} \def \g{\gamma} \def \k{\kappa} \def \l{\lambda} \def \s{\sigma} \def \t{\theta} \def \z{\zeta}

\newcommand{\mb}{\mathbb}

\newtheorem{theorem}{Theorem}
\newtheorem{lemma}[theorem]{Lemma}
\newtheorem{claim}[theorem]{Claim}
\newtheorem{cor}[theorem]{Corollary}
\newtheorem{conj}[theorem]{Conjecture}
\newtheorem{prop}[theorem]{Proposition}
\newtheorem{definition}[theorem]{Definition}
\newtheorem{question}[theorem]{Question}
\newtheorem{example}[theorem]{Example}
\newcommand{\hh}{{{\mathrm h}}}
\newtheorem{remark}[theorem]{Remark}

\numberwithin{equation}{section}
\numberwithin{theorem}{section}
\numberwithin{table}{section}
\numberwithin{figure}{section}

\def\sssum{\mathop{\sum\!\sum\!\sum}}
\def\ssum{\mathop{\sum\ldots \sum}}
\def\iint{\mathop{\int\ldots \int}}

\newcommand{\diam}{\operatorname{diam}}

\def\squareforqed{\hbox{\rlap{$\sqcap$}$\sqcup$}}
\def\qed{\ifmmode\squareforqed\else{\unskip\nobreak\hfil
\penalty50\hskip1em \nobreak\hfil\squareforqed
\parfillskip=0pt\finalhyphendemerits=0\endgraf}\fi}

\newfont{\teneufm}{eufm10}
\newfont{\seveneufm}{eufm7}
\newfont{\fiveeufm}{eufm5}
%
%
\newfam\eufmfam
     \textfont\eufmfam=\teneufm
\scriptfont\eufmfam=\seveneufm
     \scriptscriptfont\eufmfam=\fiveeufm
%
%
\def\frak#1{{\fam\eufmfam\relax#1}}

\newcommand{\bflambda}{{\boldsymbol{\lambda}}}
\newcommand{\bfmu}{{\boldsymbol{\mu}}}
\newcommand{\bfxi}{{\boldsymbol{\eta}}}
\newcommand{\bfrho}{{\boldsymbol{\rho}}}

\def\eps{\varepsilon}

\def\fK{\mathfrak K}
\def\fT{\mathfrak{T}}
\def\fL{\mathfrak L}
\def\fR{\mathfrak R}
\def\fQ{\mathfrak Q}

\def\fA{{\mathfrak A}}
\def\fB{{\mathfrak B}}
\def\fC{{\mathfrak C}}
\def\fM{{\mathfrak M}}
\def\fS{{\mathfrak  S}}
\def\fU{{\mathfrak U}}

\def\sssum{\mathop{\sum\!\sum\!\sum}}
\def\ssum{\mathop{\sum\ldots \sum}}
\def\dsum{\mathop{\quad \sum \qquad \sum}}
\def\iint{\mathop{\int\ldots \int}}
 
\def\T {\mathsf {T}}
\def\Tor{\mathsf{T}_d}
\def\Tore{\widetilde{\mathrm{T}}_{d} }

\def\sM {\mathsf {M}}
\def\sL {\mathsf {L}}
\def\sK {\mathsf {K}}
\def\sP {\mathsf {P}}

\def\ss{\mathsf {s}}

\def \balpha{\bm{\alpha}}
\def \bbeta{\bm{\beta}}
\def \bgamma{\bm{\gamma}}
\def \bdelta{\bm{\delta}}
\def \bzeta{\bm{\zeta}}
\def \blambda{\bm{\lambda}}
\def \bchi{\bm{\chi}}
\def \bphi{\bm{\varphi}}
\def \bpsi{\bm{\psi}}
\def \bxi{\bm{\xi}}
\def \bnu{\bm{\nu}}
\def \bomega{\bm{\omega}}

\def \bell{\bm{\ell}}

\def\eqref#1{(\ref{#1})}

\def\vec#1{\mathbf{#1}}

\newcommand{\abs}[1]{\left| #1 \right|}

\def\Zq{\mathbb{Z}_q}
\def\Zqx{\mathbb{Z}_q^*}
\def\Zd{\mathbb{Z}_d}
\def\Zdx{\mathbb{Z}_d^*}
\def\Zf{\mathbb{Z}_f}
\def\Zfx{\mathbb{Z}_f^*}
\def\Zp{\mathbb{Z}_p}
\def\Zpx{\mathbb{Z}_p^*}
\def\cM{\mathcal M}
\def\cE{\mathcal E}
\def\cH{\mathcal H}

\def\le{\leqslant}

\def\ge{\geqslant}

\def\sfB{\mathsf {B}}
\def\sfC{\mathsf {C}}
\def\sfS{\mathsf {S}}
\def\sfI{\mathsf {I}}
\def\L{\mathsf {L}}
\def\FF{\mathsf {F}}

\def\sE {\mathscr{E}}
\def\sS {\mathscr{S}}

\def\cA{{\mathcal A}}
\def\cB{{\mathcal B}}
\def\cC{{\mathcal C}}
\def\cD{{\mathcal D}}
\def\cE{{\mathcal E}}
\def\cF{{\mathcal F}}
\def\cG{{\mathcal G}}
\def\cH{{\mathcal H}}
\def\cI{{\mathcal I}}
\def\cJ{{\mathcal J}}
\def\cK{{\mathcal K}}
\def\cL{{\mathcal L}}
\def\cM{{\mathcal M}}
\def\cN{{\mathcal N}}
\def\cO{{\mathcal O}}
\def\cP{{\mathcal P}}
\def\cQ{{\mathcal Q}}
\def\cR{{\mathcal R}}
\def\cS{{\mathcal S}}
\def\cT{{\mathcal T}}
\def\cU{{\mathcal U}}
\def\cV{{\mathcal V}}
\def\cW{{\mathcal W}}
\def\cX{{\mathcal X}}
\def\cY{{\mathcal Y}}
\def\cZ{{\mathcal Z}}
\newcommand{\rmod}[1]{\: \mbox{mod} \: #1}

\def\cg{{\mathcal g}}

\def\vy{\mathbf y}
\def\vr{\mathbf r}
\def\vx{\mathbf x}
\def\va{\mathbf a}
\def\vb{\mathbf b}
\def\vc{\mathbf c}
\def\ve{\mathbf e}
\def\vf{\mathbf f}
\def\vg{\mathbf g}
\def\vh{\mathbf h}
\def\vk{\mathbf k}
\def\vm{\mathbf m}
\def\vz{\mathbf z}
\def\vu{\mathbf u}
\def\vv{\mathbf v}

\def\e{{\mathbf{\,e}}}
\def\ep{{\mathbf{\,e}}_p}
\def\eq{{\mathbf{\,e}}_q}

\def\Tr{{\mathrm{Tr}}}
\def\Nm{{\mathrm{Nm}}}

 \def\SS{{\mathbf{S}}}

\def\lcm{{\mathrm{lcm}}}

 \def\0{{\mathbf{0}}}

\def\({\left(}
\def\){\right)}
\def\l|{\left|}
\def\r|{\right|}
\def\fl#1{\left\lfloor#1\right\rfloor}
\def\rf#1{\left\lceil#1\right\rceil}
\def\sumstar#1{\mathop{\sum\vphantom|^{\!\!*}\,}_{#1}}

\def\mand{\qquad \mbox{and} \qquad}

\def\tblue#1{\begin{color}{blue}{{#1}}\end{color}}




\hyphenation{re-pub-lished}

\mathsurround=1pt

\def\bfdefault{b}

\def \F{{\mathbb F}}
\def \K{{\mathbb K}}
\def \N{{\mathbb N}}
\def \Z{{\mathbb Z}}
\def \P{{\mathbb P}}
\def \Q{{\mathbb Q}}
\def \R{{\mathbb R}}
\def \C{{\mathbb C}}
\def\Fp{\F_p}
\def \fp{\Fp^*}

 \def \xbar{\overline x}

\title[Glasner property for matrices with polynomial entries]{On the Glasner property for matrices with polynomial entries}

 \author[I. E. Shparlinski] {Igor E. Shparlinski}
\address{Department of Pure Mathematics, University of New South Wales,
Sydney, NSW 2052, Australia}
\email{igor.shparlinski@unsw.edu.au}

\begin{abstract}   We obtain a new bound in the uniform version of  the Glasner property for matrices with polynomial entries, 
improving that of K.~Bulinski and A.~Fish (2021). This improvement is based on a more careful examination of complete rational exponential sums with polynomials and can perhaps be used in other questions of the similar spirit. 
\end{abstract}

\keywords{Glaser property, polynomial matrices, complete  rational exponential sums}
\subjclass[2010]{11J71, 11L07}

\maketitle

%

\section{Introduction}

\subsection{Set-up and motivation} 
Let $\T = \R/\Z$ be the unit torus which we identify with the half-open interval $[0,1)$.   Glasner~\cite{Glas} has shown that for any infinite set $\cX \in \T$ and any $\varepsilon > 0$ 
one can find an $n \in \N$ such that the dilation $n\cX$ is {\it $\varepsilon$-dense\/} in $\T$, that is, for some   $n \in \N$  we have
$$
\sup_{\zeta \in \T}\, \inf_{x \in \cX} |\zeta  - n x |  \le \varepsilon.
$$

The result has been extended and improved in several directions including polynomial sequences of dilations $f(n) \cX$ 
with $f \in \Z[X]$ and also to multidimensional analogues  for sets $\cX \subseteq \Tor$, where 
$$
\Tor = \( \R/\Z\)^d
$$
for some integer $d\ge 1$, we refer to~\cite{BuFi} for a survey of previous results and further references. 

Here we continue considering the same scenario as in the recent work of Bulinski and Fish~\cite{BuFi}  and consider 
dilations $A(n)\cX$ of a set $\cX \subseteq \Tor$ by an integer polynomial matrix
 \begin{equation}\label{eq: matrix A}
A(X) = \(a_{r,s}(X)\)_{r,s=1}^d \in \Z[X]^{d\times d}.
\end{equation}

Recently, improving and generalising several previous results,  Bulinski and Fish~\cite{BuFi}  
have shown a version  of the multidimensional Glasner theorem~\cite{Glas} with 
polynomial matrix  dilations, where the set $\cX \subseteq \Tor$ can be finite.
Namely, provided that the matrix $A$ satisfies some natural necessary condition (see 
Theorem~\ref{thm:k bound}  below), 
for any  $\varepsilon>0$ there is some $k_{d, A} (\varepsilon)$ such that if $\cX \subseteq \Tor$
is of cardinality $\#\cX \ge k_{d, A} (\varepsilon)$ then for some $n\in \N$ the dilation $A(n) \cX$ is
$\varepsilon$-dense in $\T$, that is, for some   $n \in \N$  we have
$$
\sup_{\bzeta \in \Tor} \, \inf_{\vx \in \cX} \|\bzeta  - A(n) \vx \|  \le \varepsilon, 
$$
where $\|  \cdot \|$ is the distance on $\Tor$ induced by the Euclidean norm. 
In fact~\cite[Theorem~2.8]{BuFi}  gives the following   bound 
 \begin{equation}\label{eq: bound k B-F}
 k_{d, A} (\varepsilon) \le c(d, \Degf) H^{d(d+1)} \varepsilon^{-d(d+1)\(2 \Degf +1\)  + o(1)}
 \qquad \text{as}\ \varepsilon \to 0,
\end{equation}
where $\Degf$ and $H$ are the largest degree and  absolute value of non-constant 
coefficients of polynomials $a_{r,s}$ in~\eqref{eq: matrix A}, respectively, and $c(d, \Degf)$ depends only on $d$ and $\Degf$. 

\subsection{New bound}  
As in~\cite{BuFi} we note that we can always assume that the matrix~\eqref{eq: matrix A} satisfies 
 \begin{equation}\label{eq: const term}
a_{r,s}(0)= 0, \qquad  r,s=1, \ldots, d.
\end{equation}

As usual, for a vector $\vu\in \R^d$ we denote by $\vu^t$ the transposed vector. 

 \begin{theorem}
\label{thm:k bound} 
Suppose that  the matrix $A(X)$ as in~\eqref{eq: matrix A} satisfies~\eqref{eq: const term} and is
such that for any non-zero vectors $\vu, \vv \in \Z^d$ we have $\vu^t A(X) \vv \ne 0$. Then 
$$
 k_{d, A} (\varepsilon) \le c(d, \Degf) H^{(3d+1)/2+o(1)}  \varepsilon^{-d (2d+1) \Degf    -(7d+1)/2 +o(1)}, \qquad \text{as}\ \varepsilon \to 0,
 $$
 where $\Degf$ and $H$ are the largest degree and absolute value of the 
coefficients of polynomials $a_{r,s}(X)$, $r,s=1, \ldots, d$, respectively. 
\end{theorem}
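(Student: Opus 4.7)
The plan is to follow the overall Fourier/variance strategy of Bulinski and Fish~\cite{BuFi}, but to replace the crude complete-sum bound at its heart by a sharper one drawing on Weil's square-root cancellation, thereby saving in the exponents. As a first step, I would fix a non-negative bump $\phi$ on $\Tor$ of $L^{\infty}$-width $\varepsilon$, normalised so that $\int\phi\asymp\varepsilon^{d}$ and with Fourier coefficients $\widehat{\phi}(\vh)$ decaying rapidly outside $\|\vh\|\ll\varepsilon^{-1-o(1)}$. Testing $\phi$ against the point cloud and expanding in Fourier series, the condition that $A(n)\cX$ be $\varepsilon$-dense reduces to the inequality
$$
k\,\widehat{\phi}(\0) > \sum_{\0\neq\vh}|\widehat{\phi}(\vh)|\,\left|\sum_{\vx\in\cX}\e(\vh\cdot A(n)\vx)\right|
$$
for some $n\in\N$, uniformly in $\bzeta\in\Tor$.

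Next I would, as in~\cite{BuFi}, rationally approximate each $\vx\in\cX$ by a vector with common denominator $p$, where $p$ is a prime chosen from an appropriate window, and let $n$ run through a full residue system modulo $p$. A second-moment calculation then yields
$$
\frac{1}{p}\sum_{n=0}^{p-1}\left|\sum_{\vx\in\cX}\ep(\vh\cdot A(n)\vx)\right|^{2} = \frac{1}{p}\sum_{\vx_{1},\vx_{2}\in\cX} S_{p}(\vh,\vx_{1}-\vx_{2}),
$$
where $S_{p}(\vh,\vv)=\sum_{n=0}^{p-1}\ep(\vh^{t}A(n)\vv)$ is a complete polynomial exponential sum modulo $p$. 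For every \emph{non-degenerate} pair, meaning $\vh^{t}A(X)\vv\not\equiv 0\pmod p$, Weil's theorem gives the square-root bound $|S_{p}(\vh,\vv)|\le(\Degf-1)p^{1/2}$. This replaces the weaker $p^{1-1/\Degf}$ estimate used in~\cite{BuFi} and is the true source of the improvement in the exponent of $H$.

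The remaining task is to count the \emph{degenerate} pairs $(\vh,\vv)$ with $\vh^{t}A(X)\vv\equiv 0\pmod p$. The hypothesis $\vu^{t}A(X)\vv\neq 0$ over $\Z$ rules out identical vanishing, so a degeneracy forces a divisibility relation on the coefficients of $\vh^{t}A(X)\vv$, which are of size $\ll H\|\vh\|\|\vv\|$. I would then run a pigeonhole/sieving argument across a dyadic range of primes $p$ to show that, for some such $p$, only a small proportion of frequency-difference pairs $(\vh,\vx_{1}-\vx_{2})$ with $\|\vh\|\ll\varepsilon^{-1-o(1)}$ is degenerate modulo $p$. Balancing (i) the Weil contribution from non-degenerate pairs, (ii) the degenerate-pair count, (iii) the number of Fourier modes, and (iv) the size of $\cX$, and then optimising the choice of $p$ as a function of $H$, $d$, $\Degf$ and $\varepsilon$, should recover the threshold on $k_{d,A}(\varepsilon)$ claimed in Theorem~\ref{thm:k bound}.

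The hardest step, and the locus of our improvement over~\cite{BuFi}, is the uniform handling of the degenerate pairs: any lossy count here would wipe out the half-power gain from Weil and propagate directly into the exponent of $H$. To reach the sharper $H^{(3d+1)/2+o(1)}$ factor, I would decouple the roles of $\vh$ and of $\vv=\vx_{1}-\vx_{2}$ by bounding, separately and prime by prime, the number of $\vh$ for which the leading coefficient vector of $\vh^{t}A(X)$ is divisible by $p$, and the number of differences $\vv$ for which the resulting polynomial then vanishes modulo $p$. Pushing this more careful accounting through the parameter optimisation, together with the half-power saving from Weil, is what is expected to yield the new exponents in the bound, improving~\eqref{eq: bound k B-F}.
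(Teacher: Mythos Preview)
Your proposal has a genuine structural gap: in the Bulinski--Fish framework that you invoke, the moduli are \emph{not} at your disposal. After the reduction to $\cX\subseteq\Tor\cap\Q^d$ (which is not an approximation argument but a pigeonhole reduction), the basic inequality of Lemma~\ref{lem:Bad Sets} produces a sum over \emph{all} $q\ge 1$, weighted by the counts $h_q$ of pairs $(\vx_i,\vx_j)$ whose difference has exact denominator $q$. These $q$ are forced by the adversarially chosen set $\cX$ and are typically highly composite; you cannot restrict to, or replace them by, a single prime $p$ of your choosing. Your alternative of rationally approximating each $\vx$ by a vector with prime denominator $p$ and then averaging $n$ over a full residue system modulo $p$ does not survive the polynomial action: with $n\le p$ the entries of $A(n)$ are of size $\ll H p^{\Degf}$, so an approximation error $O(1/p)$ in $\vx$ becomes $O(Hp^{\Degf-1})$ in $A(n)\vx$, which for $\Degf\ge 2$ swamps the target scale $\varepsilon$. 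Consequently the Weil square-root bound for prime moduli is simply not available here, and your degenerate/non-degenerate dichotomy over primes never gets off the ground.

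What the paper actually does is to stay with arbitrary composite moduli $q$ and extract \emph{square-root cancellation on average} over $q$ rather than pointwise. The key input is Lemma~\ref{lem:d-power-factor}: factor $q=q_2\cdots q_{\Degf}$ into pairwise coprime pieces where $q_i$ is $i$-th power full but $(i{+}1)$-th power free, and bound the complete sum by $\prod_i q_i^{1-1/i}\,q^{o(1)}$. This is combined with the Erd\H{o}s--Szekeres sparsity bound $\#\cF_\nu(x)\ll x^{1/\nu}$ for $\nu$-th power full integers, so that after a dyadic decomposition the net contribution behaves like $q^{1/2+o(1)}$ on average. The sum over $q$ is then split at a threshold $R$: for $q\le R$ one uses Lemma~\ref{lem:hq indiv} and the refined bound above; for $q>R$ one falls back on the Hua bound~\eqref{eq:bound Hua} together with $\sum_q h_q=k^2$. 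Optimising $R$ as in~\eqref{eq:R opt} gives the stated exponents. If you want to salvage your write-up, the correct replacement for ``choose a prime $p$ and apply Weil'' is precisely this power-full decomposition of the forced moduli.
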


To compare Theorem~\ref{thm:k bound} with the bound~\eqref{eq: bound k B-F} we notice that 
$$
\frac{3d+1}{2} \le d(d+1) \mand 
d (2d+1) \Degf  +   \frac{7d+1}{2}  \le d(d+1)\(2 \Degf +1\)
$$
for all $d \ge 1$ and $e \ge 2$. 

\subsection{Ideas behind the proof} 
The proof of~\eqref{eq: bound k B-F} in~\cite{BuFi}  is based on the classical  {\it Hua bound\/}  (see~\eqref{eq:Hua} below) of complete 
rational exponential sums (for a proof see, for example,~\cite[Theorem~7.1]{Vau}). This bound has also been used in~\cite{KeLe}. Generally speaking, the Hua bound~\eqref{eq:Hua} is tight and cannot be improved for arbitrary moduli $q$. However the set of moduli for which it is optimal is rather sparse, which is the idea we exploit here. More precisely,   we use more refined information about  complete rational exponential sums (see Lemma~\ref{lem:d-power-factor}) and 
some well-known results about the arithmetic structure of integers (see~\eqref{eq:i-full}). This allows us to  improve the exponent of~\eqref{eq: bound k B-F}. 

Although the improved bound is perhaps still far from the optimal bound, we believe that the technique we employ
deserves to be known better and can also be used for some other problems.

\section{Preparations} 

\subsection{Notation and conventions} 

Throughout the paper, the notations $U = O(V)$, 
$U \ll V$ and $ V\gg U$  are equivalent to $|U|\leqslant c V$ for some positive constant $c$, 
which throughout the paper may depend on the dimension $d$ and the degree $\Degf$.

We always use $\Degf$  for the largest degree and use $H$ for the  absolute value of the 
coefficients of polynomials $a_{r,s}(X)$, $r,s=1, \ldots, d$ of the matrix~\eqref{eq: matrix A}, 
which we always assume to satisfy~\eqref{eq: const term}.

Thus we always suppress the dependence on $d$ and $\Degf$ in the `$\ll$', however we give implicit
(albeit not optimised) estimates in terms of $H$. 

For any quantity $V> 1$ we write $U = V^{o(1)}$ (as $V \rightarrow \infty$) to indicate a function of $V$ which 
satisfies $ V^{-\delta} \le |U| \le V^\delta$ for any $\delta> 0$, provided $V$ is large enough. One additional advantage 
of using $V^{o(1)}$ is that it absorbs $\log V$ and other similar quantities without changing  the whole 
expression. 

We identify $\Tor$ with the unit cube $[0,1)^d$ and thus treat elements of $\Tor$ as real numbers.

Given a vector $\va = (a_1, \ldots, a_\nu) \in \Z^\nu$ and $q\in \N$ we write 
$$
\gcd(\va, q) = \gcd(a_1, \ldots, a_\nu,q). 
$$

For a polynomial 
$$
f(X) = f_\Degf X^\Degf + \ldots + f_1 X + f_0 \in \Z[X]
$$
and $q \in \N$, 
we define  the {\it $q$-content\/}  $\cont_q(f)$ of $f$ by 
$$\cont_q(f) = \gcd(f_1, \ldots, f_\Degf,q). 
$$

We say that   $f$ is {\it $q$-primitive\/} if  $\cont_q(f)=1$. 

Finally, we denote 
$$
\e_q(z) = \exp(2 \pi z/q).
$$
\subsection{Reduction to bounds of complete rational  exponential sums} 
It is shown in the proof of~\cite[Theorem~2.8]{BuFi} that we can assume that 
$$
\vx_i - \vx_j \in \Q, \qquad i,j=1, \ldots, k.
$$  
Furthermore, since additive shifts of $\cX$ do not change the property of $A(n)\cX$ 
being $\varepsilon$-dense it is sufficient to consider the case $\cX   \subseteq \Tor\cap \Q^d$. 

We now assume that the set 
\begin{equation}
\label{eq:set X} 
\cX = \{\vx_1, \ldots, \vx_k\}  \subseteq \Tor\cap \Q^d
\end{equation}
is fixed and for an integer $q\ge 1$ we denote by $h_q$ 
the number of pairs $(i,j)$, $1 \le i, j \le k$, such that $q$ is the smallest integer with  $q\(\vx_i - \vx_j\) \in \Z^d$ 
(or, alternatively,  $q\(\vx_i - \vx_j\) = {\mathbf 0}$ if considered as elements of $\Tor$).

Also for an integer $M\ge 1$ we consider the set 
$$
\cB(M) = \{ \vm  \in \Z^d:~ \vm \ne  {\mathbf 0}\} \cap [-M,M]^d. 
$$


As in~\cite{BuFi}, our argument is based on the following inequality which is contained in~\cite[Proposition~2.6]{BuFi}, and 
which in turn follows from~\cite[Proposition~2]{KeLe}. 

\begin{lemma} 
\label{lem:Bad Sets}
If a set $\cX$ as in~\eqref{eq:set X} is such that  for any $n\in \N$ the dilation $A(n) \cX$ is not 
$\varepsilon$-dense in $\Tor$ then for $M= \fl{d/\varepsilon}$ we have 
$$
k^2 \ll \varepsilon^{-d} \sum_{\vm  \in \cB(M)} \sum_{q=1}^\infty \frac{h_q}{q} \left|\sum_{n=1}^q \eq\(\vm^t A(n) \vb_q\) \right|
+ \varepsilon^{-d}M^d k, 
$$ 
where $\vb_{q}\in \Z^d$ some integer vectors with  $\gcd(\vb_q, q) = 1$. 
\end{lemma}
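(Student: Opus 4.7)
The plan is Fourier-analytic: turn the geometric statement that $A(n)\cX$ is not $\varepsilon$-dense into a linear identity on $\Tor$ for each $n$, extract a bound of the shape $k \ll \sum_{\vm \in \cB(M)} |S_n(\vm)|$ with $S_n(\vm) = \sum_i \e(\vm^t A(n)\vx_i)$, and then average over $n$ through a common period of all exact denominators to turn the arising sums into the complete rational exponential sums on the right-hand side.

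First, for each $n \in \N$ the hypothesis supplies $\bzeta_n \in \Tor$ with $\|\bzeta_n - A(n)\vx_i\| > \varepsilon$ for every $i$. I would then invoke a Beurling--Selberg-type band-limited minorant $\psi \colon \Tor \to \R$ of $\mathbf 1_{B(\0,\varepsilon)}$ whose Fourier expansion is supported in $\cB(M) \cup \{\0\}$, which satisfies $\psi(\vy) \le \mathbf 1_{B(\0,\varepsilon)}(\vy)$ pointwise on $\Tor$, and whose integral obeys $\hat\psi(\0) = \int_{\Tor}\psi \gg \varepsilon^d$ together with $|\hat\psi(\vm)| \ll \varepsilon^d$ uniformly in $\vm$. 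Such $\psi$ can be built by a tensor-product construction from classical one-dimensional Selberg minorants on a cube inscribed in $B(\0,\varepsilon)$. Since $\psi(A(n)\vx_i - \bzeta_n) \le \mathbf 1_{B(\bzeta_n,\varepsilon)}(A(n)\vx_i) = 0$ for every $i$, Fourier-expanding gives
$$
0 \ge \sum_{i=1}^k \psi\(A(n)\vx_i - \bzeta_n\) = \hat\psi(\0) k + \sum_{\vm \in \cB(M)} \hat\psi(\vm) \e(-\vm \cdot \bzeta_n) S_n(\vm),
$$
and the triangle inequality combined with the size bounds on $\hat\psi$ yields $k \ll \sum_{\vm \in \cB(M)} |S_n(\vm)|$ for every $n \in \N$. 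Squaring and applying the Cauchy--Schwarz inequality to a sum with $|\cB(M)| \asymp \varepsilon^{-d}$ terms produces the preliminary estimate
$$
k^2 \ll \varepsilon^{-d} \sum_{\vm \in \cB(M)} |S_n(\vm)|^2, \qquad n \in \N.
$$

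The concluding step is to average over $n$ so as to uncover complete sums. Expanding $|S_n(\vm)|^2 = \sum_{i,j=1}^k \e(\vm^t A(n)(\vx_i - \vx_j))$, the diagonal $i = j$ contributes $k$ per $\vm$, which summed over $\cB(M)$ and multiplied by the outer $\varepsilon^{-d}$ gives precisely the announced error term $\varepsilon^{-d} M^d k$. For $i \ne j$, write $\vx_i - \vx_j = \va_{ij}/q_{ij}$ with $\gcd(\va_{ij}, q_{ij}) = 1$, where $q_{ij}$ is the exact denominator. Take $N$ to be any common multiple of all $q_{ij}$ and average the preliminary estimate over $n = 1, \ldots, N$. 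Since $A(X) \in \Z[X]^{d \times d}$, one has $A(n+q) \equiv A(n) \pmod q$ componentwise, so the map $n \mapsto \e_{q_{ij}}(\vm^t A(n) \va_{ij})$ is $q_{ij}$-periodic and
$$
\frac{1}{N} \sum_{n=1}^N \e\(\vm^t A(n)(\vx_i - \vx_j)\) = \frac{1}{q_{ij}} \sum_{n=1}^{q_{ij}} \e_{q_{ij}}\(\vm^t A(n) \va_{ij}\).
$$
Grouping the off-diagonal pairs by their common exact denominator $q$, applying the triangle inequality, and taking $\vb_q$ to be the $\va_{ij}$ that maximises the modulus of the resulting complete sum among pairs with $q_{ij} = q$ (so that automatically $\gcd(\vb_q, q) = 1$) delivers the inequality of the lemma.

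The main obstacle is the construction and quantitative control of the Beurling--Selberg-type band-limited minorant $\psi$ on $\Tor$: the three requirements (Fourier support in $\cB(M)$, the pointwise domination $\psi \le \mathbf 1_{B(\0,\varepsilon)}$, and the sharp integral lower bound $\hat\psi(\0) \gg \varepsilon^d$) must hold simultaneously, and the tensor-product reduction to the classical one-dimensional Selberg construction gives them. Once this tool is in hand, the Fourier identity, the Cauchy--Schwarz step, the isolation of the diagonal (which produces the $\varepsilon^{-d} M^d k$ correction), and the averaging over a common period of all exact denominators are routine manipulations.
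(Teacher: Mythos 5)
First, a point of reference: the paper does not prove this lemma at all; it is imported verbatim from~\cite[Proposition~2.6]{BuFi}, which in turn rests on~\cite[Proposition~2]{KeLe}. So your proposal is being measured against that cited argument rather than against anything in the text. Your overall architecture --- a band-limited test function giving $k \ll \sum_{\vm\in\cB(M)}|S_n(\vm)|$ for each $n$, Cauchy--Schwarz to pass to the second moment, separation of the diagonal (which is exactly where $\varepsilon^{-d}M^d k$ comes from), and averaging $n$ over a common multiple of the exact denominators to produce complete sums grouped by denominator $q$ with weight $h_q/q$ --- is the right one and is essentially the Alon--Peres/Kelly--L\^e route.

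The genuine gap is the single point you flagged as the ``main obstacle'' and then waved away: the tensor product of one-dimensional Selberg minorants is \emph{not} a minorant of the indicator of the cube. The one-dimensional minorant $s^-$ takes negative values off its interval, and a product of, say, two negative values is positive at a point where the product of indicators vanishes, so the claimed pointwise bound $\prod_l s^-(y_l)\le\prod_l\mathbf 1_{I_l}(y_l)$ simply fails for $d\ge 2$. This is a well-known trap. The standard repairs are either the telescoping correction $\psi=\prod_l s_l^{+}-\sum_{j}\bigl(s_j^{+}-s_j^{-}\bigr)\prod_{l\ne j}s_l^{+}$, which works because the majorants $s_l^{+}$ are non-negative, or --- as in the cited sources --- abandoning minorants altogether in favour of a non-negative Fej\'er-type kernel with non-negative Fourier coefficients supported in $\cB(M)\cup\{\0\}$, exploiting that $\sum_i K(A(n)\vx_i-\vt)$ integrated over $\vt$ near $\bzeta_n$ is controlled by the kernel's tail. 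Either repair also forces you to recheck constants: with the Euclidean norm the inscribed cube has side $2\varepsilon/\sqrt d$, and the corrected minorant has mean $\gg\varepsilon^d$ only once the degree exceeds a multiple of $d^{3/2}/\varepsilon$, so the specific normalisation $M=\lfloor d/\varepsilon\rfloor$ is not automatic (though harmless for the application, where only $M\ll\varepsilon^{-1}$ with $d$-dependent constants is ever used). A final, cosmetic point: your maximising choice of $\vb_q$ depends on $\vm$ as well as on $q$, so strictly you obtain the inequality with vectors $\vb_{q,\vm}$; since only $\gcd(\vb_q,q)=1$ and uniformity over such vectors are used later (Lemma~\ref{lem:hq tail}), this is not a defect of substance. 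Everything after the test-function step --- the periodicity $A(n+q)\equiv A(n)\pmod q$, the identification of the exact denominators, and the bookkeeping with $h_q$ --- is correct.
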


Hence, to use Lemma~\ref{lem:Bad Sets} we need:
\begin{itemize}
\item estimate the coefficients $h_q$, see the bound~\eqref{eq:sum hq} 
and Lemma~\ref{lem:hq indiv}; 
\item estimate the content $\cont_q \(\vm^t A(X) \vb_{q}\)$ of the polynomials
in the exponential sums see Lemma~\ref{lem:hq tail};
\item use bounds of complete  rational exponential sums for $q$-primi\-tive polynomials. 
see Corollary~\ref{cor:d-power-factor gcd}. 
\end{itemize}

\subsection{Some arithmetic estimates} 
\label{sec:arith estim} 
First we estimate the coefficients $h_q$ for a given set~\eqref{eq:set X}.
 Hence, we   have  the following  trivial identity
\begin{equation}
\label{eq:sum hq} 
\sum_{q =1}^\infty h_q = k^2 .
\end{equation}

Using  the argument in the proof of~\cite[Proposition~1]{KeLe}, which is  also used 
in the proof of~\cite[Proposition~2.7]{BuFi}, we immediately obtain the   
following bound. 

\begin{lemma} 
\label{lem:hq indiv}
For any $q \in \N$ we have $h_q \le k q^d$.
\end{lemma}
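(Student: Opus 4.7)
The plan is to bound $h_q$ by dropping the minimality condition on $q$: rather than counting only those pairs $(i,j)$ for which $q$ is the \emph{smallest} positive integer such that $q(\vx_i - \vx_j)\in\Z^d$, I will bound the larger quantity
\[
\widetilde h_q := \#\{(i,j):~1\le i,j\le k,\ q(\vx_i - \vx_j)\in \Z^d\} \ge h_q,
\]
and show $\widetilde h_q \le k q^d$.

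The key observation is that the condition $q(\vx_i - \vx_j)\in \Z^d$ is equivalent to
\[
\vx_j \equiv \vx_i \pmod{\tfrac{1}{q}\Z^d}.
\]
That is, for a fixed index $i$, the admissible $\vx_j$ all lie in the single coset $\vx_i + \tfrac{1}{q}\Z^d \pmod{\Z^d}$. Under the identification $\Tor = [0,1)^d$, the intersection of this coset with the fundamental domain is precisely the finite set $\vx_i + \tfrac{1}{q}\{0,1,\ldots,q-1\}^d \pmod{\Z^d}$, which has exactly $q^d$ elements. Since the points $\vx_1,\ldots,\vx_k$ are distinct elements of $\Tor$, at most $q^d$ of them can lie in this coset, so for each fixed $i$ there are at most $q^d$ admissible values of $j$.

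Summing over the $k$ choices of $i$ gives $\widetilde h_q \le k q^d$, and hence $h_q\le k q^d$ as required. There is no real obstacle here; the only subtlety is the harmless passage from the minimal-$q$ condition defining $h_q$ to the weaker divisibility condition, which is what makes the coset-counting argument directly applicable.
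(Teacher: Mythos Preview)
Your proof is correct and takes essentially the same approach as the paper's own proof, which is the single-line observation that for each fixed $i$ there are at most $q^d$ vectors $\vx \in \Tor$ with $q(\vx_i - \vx) \in \Z^d$. Your version simply spells out in more detail the coset-counting, the use of distinctness of the $\vx_j$, and the harmless relaxation of the minimality condition on $q$.
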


\begin{proof} Clearly, for each $i =1, \ldots, k$ there are at most $q^d$ vectors $\vx \in \Tor$ with  $q\(\vx_i - \vx\) \in \Z^d$.
\end{proof}

We make use of the following upper bound on  $\cont_q \(\vm^t A(X) \vb_{q}\)$ in Lemma~\ref{lem:Bad Sets}, 
which follows from~\cite[Corollary~2.4]{BuFi}.

\begin{lemma} 
\label{lem:hq tail}
For  any real $M \ge 1$ uniformly over vectors $\vm \in \cB(M)$ and vectors $\vb\in \Z^d$ with $\gcd(\vb,q)=1$, we have 
$$
\cont_q \(\vm^t A(X) \vb\) \ll (HM)^d. 
$$
\end{lemma}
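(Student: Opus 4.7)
The plan is to realise the coefficients of the polynomial $F(X)=\vm^t A(X)\vb$ as images of $\vb$ under an integer linear map, and then combine the non-degeneracy hypothesis on $A$ with the coprimality $\gcd(\vb,q)=1$ to bound the content.

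Writing $a_{r,s}(X)=\sum_{i=1}^{\Degf} a_{r,s,i}X^i$ (with no constant term by~\eqref{eq: const term}), the coefficient of $X^i$ in $F(X)$ is $(P\vb)_i$, where $P=(p_{s,i})\in \Z^{\Degf\times d}$ has entries $p_{s,i}=\sum_r m_r a_{r,s,i}$ satisfying $|p_{s,i}|\le dMH$ for $\vm\in \cB(M)$. One may assume $q\ge 2$ (otherwise $\cont_q(F)=1$ and the bound is trivial), so $\gcd(\vb,q)=1$ forces $\vb\ne \0$. Applying the hypothesis of Theorem~\ref{thm:k bound} with $\vu=\vm$ shows $P\vv\ne \0$ for every nonzero $\vv\in \Z^d$; hence $P$ has full column rank $d$, and there is a $d\times d$ submatrix $C$ of $P$ with $\det(C)\ne 0$.

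Set $c=\cont_q(F)$. The rows of $C$ extract $d$ of the non-constant coefficients of $F$, each divisible by $c$, so $C\vb\in c\Z^d$; multiplying by the integer adjugate gives
$$
\det(C)\,\vb=\operatorname{adj}(C)\cdot C\vb\in c\Z^d,
$$
that is, $c\mid \det(C)\,b_s$ for each $s=1,\ldots,d$. The step I expect to do the actual work is to leverage $\gcd(\vb,q)=1$ prime by prime: for any prime power $p^k\mid c$ one has $p^k\mid q$, so the coprimality supplies an index $s$ with $p\nmid b_s$, and hence $p^k\mid \det(C)$. Taking all prime power divisors of $c$ together gives $c\mid \det(C)$, and the bound $|\det(C)|\le d!(dMH)^d\ll (HM)^d$ (Leibniz formula) finishes the proof.
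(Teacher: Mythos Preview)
Your argument is correct. The paper itself does not give a proof here: it simply records that the bound follows from~\cite[Corollary~2.4]{BuFi}. What you have supplied is a clean self-contained version of that argument. The key observation --- that the non-degeneracy hypothesis $\vu^t A(X)\vv\ne 0$ from Theorem~\ref{thm:k bound} with $\vu=\vm$ forces the linear map $\vb\mapsto P\vb$ to be injective over~$\Z$ (in particular this implicitly forces $\Degf\ge d$, so a full-rank $d\times d$ minor $C$ exists) --- is exactly the mechanism behind the cited corollary. The adjugate step together with $\gcd(\vb,q)=1$ prime by prime then yields $\cont_q(F)\mid\det(C)$, and the Leibniz bound $|\det(C)|\le d!(dMH)^d\ll (HM)^d$ finishes. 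One purely cosmetic remark: in your definition $P=(p_{s,i})\in\Z^{\Degf\times d}$ the index order is reversed relative to the stated shape (rows are indexed by~$i$, columns by~$s$), but the computation $(P\vb)_i=\sum_s p_{s,i}b_s$ is unambiguous and the mathematics is unaffected.
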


Given an integer $\nu \ge 2$, an integer number $n$ is called
\begin{itemize}
\item  {\it $\nu$-th power free\/} if   any prime number $p \mid n$ satisfies $p^\nu \nmid n$; 

\item  {\it  $\nu$-th power  full\/} if any prime number  $p \mid n$ satisfies $p^\nu \mid n$. 
\end{itemize}
We note that $1$ is both $\nu$-th  power free and $\nu$-th  power full for any $\nu$.   

For any integer $i\ge 2$ it is  convenient to denote   
$$
\cF_{\nu}=\{n \in \N:~  \text{$n$ is $\nu$-th power full}\} \quad \text{and} \quad
\cF_{\nu}(x)= \cF_\nu\cap [1,x].
$$ 
The classical result of  Erd{\H o}s and  Szekeres~\cite{ErdSz} gives an asymptotic 
formula for the cardinality of $\cF_{\nu}(x)$ which we present here in a very relaxed form 
as the upper bound 
\begin{equation}
\label{eq:i-full}
\# \cF_{\nu}(x) \ll x^{1/\nu} . 
\end{equation}

\subsection{Bounds of complete rational exponential sums} 

For $q\in \N$ and $\vf = \(f_1, \ldots, f_\Degf\)$, we denote
$$
S_{\Degf,q}(\vf) =\sum_{n=1}^{q}\e_q(f_1n+\ldots+f_\Degf n^\Degf ).
$$

Our new tool is the following bound on $|S_{\Degf,q}(\vf)|$ which is derived in~\cite{BCS} from the classical {\it Weil and Hua bounds\/},  see, for example,~\cite[Theorem~5.38]{LN}, 
combined with,~\cite[Equation~(2.5)]{CPR} and the results of,~\cite{DiQi,Stech} giving  a slight improvement of the 
Hua bound (see also~\cite[Theorem~7.1]{Vau}).

\begin{lemma}
\label{lem:d-power-factor}
Write  an integer $q\ge 1$ as 
$q=q_2 \ldots q_\Degf $ such that
\begin{itemize} 
\item $q_2\ge 1$ is  cube  free,
\item $q_i$ is $i$-th power full but $(i+1)$-th power free when $3 \le i \le \Degf -1$,
\item $q_\Degf$ is   $\Degf $-th power full, 
\end{itemize}
with $\gcd(q_i,q_j) = 1$, $2 \le i < j \le \Degf $.
For  $\vf =  \(f_1, \ldots, f_\Degf \)\in \Z^\Degf$ with 
$$
\gcd\(q, f_1, \ldots, f_\Degf \) = 1, 
$$ 
we have 
$$
|S_{\Degf ,q}(\vf)| \le  \prod_{i=2}^\Degf  q_i^{1-1/i} q^{o(1)}. 
$$ 
\end{lemma}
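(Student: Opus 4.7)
The plan is to combine multiplicativity of complete exponential sums with sharp prime-power bounds. First, since $q_2, \ldots, q_\Degf$ are pairwise coprime, the Chinese Remainder Theorem yields a factorisation
$$
|S_{\Degf, q}(\vf)| = \prod_{i=2}^{\Degf} |S_{\Degf, q_i}(\vf^{(i)})|,
$$
where each $\vf^{(i)}\in \Z^\Degf$ is obtained from $\vf$ by multiplying its coefficients by a multiplicative inverse of $q/q_i$ modulo $q_i$. Since these inverses are units modulo $q_i$, the primitivity hypothesis $\gcd(q, f_1, \ldots, f_\Degf) = 1$ transfers cleanly to $\gcd(q_i, f^{(i)}_1, \ldots, f^{(i)}_\Degf) = 1$ for each $i$, so I may treat each factor as a $q_i$-primitive sum.

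Next, I would split each $q_i$ into its prime-power parts and apply the appropriate sharp estimate to each. By hypothesis, primes dividing $q_2$ appear with multiplicity $1$ or $2$; primes dividing $q_i$ for $3 \le i \le \Degf-1$ appear with multiplicity exactly $i$; and primes dividing $q_\Degf$ appear with multiplicity at least $\Degf$. For each prime-power factor $p^k$ I would invoke: the Weil bound $|S_{\Degf,p}| \ll p^{1/2}$ when $k=1$; the classical evaluation $|S_{\Degf, p^2}| \ll p$ when $k=2$ (after writing $n = a + bp$ and observing that the inner $b$-sum forces a congruence on the derivative); the refined Hua-type bound $|S_{\Degf,p^k}| \ll p^{k-1+o(1)}$ for $3 \le k \le \Degf$, which is the key input from~\cite{DiQi, Stech}; and the classical Hua bound $|S_{\Degf,p^k}| \ll p^{k(1-1/\Degf)+o(1)}$ when $k \ge \Degf$. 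In each case this is exactly the bound required for the $p^k$ factor of $q_i$ to contribute $p^{k(1-1/i)+o(1)}$.

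Multiplying the prime-power bounds over the primes of $q_i$ gives $|S_{\Degf, q_i}| \ll q_i^{1-1/i + o(1)}$, and the product over $i = 2, \ldots, \Degf$ yields the desired estimate, with the divisor-function losses and the individual $p^{o(1)}$ factors absorbed into a single $q^{o(1)}$. The principal obstacle is the refined prime-power bound $|S_{\Degf, p^k}| \ll p^{k-1+o(1)}$ in the intermediate range $3 \le k \le \Degf - 1$: the crude Hua bound $p^{k(1-1/\Degf)}$ is considerably weaker there, and only the Dietmann--Qin / Stechkin refinement produces the exponent $1-1/k$ that matches $1-1/i$ in the final product. The multiplicativity step and the handling of the squarefree and $p^2$ components are standard.
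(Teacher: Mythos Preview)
Your approach is correct and matches what the paper indicates: the lemma is not proved in the paper but imported from~\cite{BCS}, with the stated ingredients being precisely multiplicativity, the Weil bound for $k=1$, the Hua bound for $k\ge \Degf$, and the refined prime-power estimate in the intermediate range (which the paper attributes to~\cite[Equation~(2.5)]{CPR} together with~\cite{DiQi,Stech}). One small correction of attribution: the reference~\cite{DiQi} is Ding and Qi, not ``Dietmann--Qin''.
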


We remark, that in~\cite{BuFi} the bound 
\begin{equation}
\label{eq:Hua} 
|S_{\Degf ,q}(\vf)| \le   q^{1-1/\Degf  +o(1)}
\end{equation}
has been used. So our improvement comes from using Lemma~\ref{lem:d-power-factor}
together with a classification of moduli $q$ for which it gives an improvement of~\eqref{eq:Hua}. 

We now immediately derive from Lemma~\ref{lem:d-power-factor} the following more general bound. 

\begin{cor}
\label{cor:d-power-factor gcd}
Write  an integer $q\ge 1$ as 
$q=q_2 \ldots q_\Degf $ such that
\begin{itemize} 
\item $q_2\ge 1$ is  cube  free,
\item $q_i$ is $i$-th power full but $(i+1)$-th power free when $3 \le i \le \Degf -1$,
\item $q_\Degf $ is   $\Degf $-th power full, 
\end{itemize}
with $\gcd(q_i,q_j) = 1$, $2 \le i < j \le \Degf $.
For   $\vf =  \(f_1, \ldots, f_\Degf \)\in \Z^\Degf$  with 
$$
\gcd\(q, f_1, \ldots, f_\Degf \) =  s, 
$$ 
we have 
$$
|S_{\Degf ,q}(\vf)| \le q^{1+o(1)}  \prod_{i=2}^\Degf  \(q_i/\gcd(q_i,s)\)^{-1/i} .
$$ 
\end{cor}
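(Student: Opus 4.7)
The plan is to reduce to Lemma~\ref{lem:d-power-factor} by factoring out the common divisor $s = \gcd(q, f_1, \ldots, f_\Degf)$ from both the modulus and the coefficients, then apply Lemma~\ref{lem:d-power-factor} to the resulting $(q/s)$-primitive sum, and finally rearrange the bound into the claimed shape by a prime-by-prime comparison.

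Writing $f_j = s h_j$ for $j = 1, \ldots, \Degf$ and using the identity $\e_q(sm) = \e_{q/s}(m)$ (valid since $s\mid q$), the summand of $S_{\Degf,q}(\vf)$ becomes $\e_{q/s}(h_1 n + \cdots + h_\Degf n^\Degf)$, which is periodic in $n$ with period $q/s$. Summing over $q = s\cdot(q/s)$ residues therefore gives
$$
S_{\Degf,q}(\vf) = s \cdot S_{\Degf, q/s}(\vh).
$$
Any prime $p$ dividing $\gcd(q/s, h_1, \ldots, h_\Degf)$ would force $ps \mid \gcd(q, f_1, \ldots, f_\Degf) = s$, a contradiction. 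Hence $\vh$ is $(q/s)$-primitive, and Lemma~\ref{lem:d-power-factor} applied to $S_{\Degf, q/s}(\vh)$ yields
$$
|S_{\Degf,q}(\vf)| \le q^{o(1)} \cdot s \prod_{i=2}^{\Degf} Q_i^{1-1/i},
$$
where $q/s = Q_2 \cdots Q_\Degf$ is the analogous power-full decomposition of $q/s$.

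To match the target form I would compare the two products prime-by-prime. The factorization $q = q_2 \cdots q_\Degf$ is governed by the non-decreasing index function $\iota\colon \N \to \{2, \ldots, \Degf\}$ defined by $\iota(1) = \iota(2) = 2$, $\iota(c) = c$ for $3 \le c \le \Degf - 1$, and $\iota(c) = \Degf$ for $c \ge \Degf$, so that each prime power $p^a$ exactly dividing $q$ lands inside $q_{\iota(a)}$. Fix a prime $p$ and let $a, b$ denote its exponents in $q$ and $s$ respectively. Then the $p$-exponent in $s\prod_{i} Q_i^{1-1/i}$ equals $a - (a-b)/\iota(a-b)$ (with the convention that the ratio is $0$ when $a=b$), while the $p$-exponent in $q\prod_{i} \bigl(q_i/\gcd(q_i,s)\bigr)^{-1/i}$ equals $a - (a-b)/\iota(a)$. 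Since $a - b \le a$ and $\iota$ is non-decreasing, $\iota(a-b) \le \iota(a)$, so the first exponent is bounded by the second. Multiplying over primes yields $s\prod_i Q_i^{1-1/i} \le q \prod_i (q_i/\gcd(q_i,s))^{-1/i}$, and the claim follows.

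The only delicate point is this last bookkeeping step: dividing $q$ by $s$ may shift a prime into a lower-index component of the power-full decomposition (for instance $p^5$ with $p^2 \| s$ moves from index $\Degf$ down to index $3$ when $\Degf \ge 4$). The main thing to verify is that this shift always works in favour of the claimed bound, which is precisely what the monotonicity of $\iota$ supplies.
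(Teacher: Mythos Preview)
Your proof is correct and follows the natural route the paper has in mind: factor out $s$ to reduce to a $(q/s)$-primitive sum, apply Lemma~\ref{lem:d-power-factor}, and then compare the power-full decompositions of $q/s$ and $q$ prime by prime via the monotonicity of the index function~$\iota$. The paper itself supplies no explicit proof, stating only that the corollary is ``immediately derived'' from Lemma~\ref{lem:d-power-factor}; your argument fills in precisely the details left implicit there, including the one non-obvious point that dividing by $s$ can only move a prime to a lower-index component.
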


\section{Proof of Theorem~\ref{thm:k bound}} 

\subsection{Preliminary split} 
Let $\cX$ be a set  as in~\eqref{eq:set X}  such that  for any $n\in \N$ the dilation $A(n) \cX$ is not 
$\varepsilon$-dense in $\Tor$.

We choose some integer parameter $R\ge 1$ and using Lemma~\ref{lem:Bad Sets} write 
\begin{equation}
\label{eq:S1 S2} 
k^2 \ll  \varepsilon^{-d} \(S_1 + S_2\)+ \varepsilon^{-d}M^d k,
\end{equation}
where 
\begin{align*}
&S_1=\sum_{\vm  \in \cB(M)}  \sum_{q=1}^R \frac{h_q}{q} \left|\sum_{n=1}^q \eq\(\vm^t A(n) \vb_q\) \right|,\\
&S_2=\sum_{\vm  \in \cB(M)}  \sum_{q=R+1}^\infty \frac{h_q}{q} \left|\sum_{n=1}^q \eq\(\vm^t A(n) \vb_q\) \right|.
\end{align*}

\subsection{Bound on $S_1$}  To estimate $S_1$ we first use Lemma~\ref{lem:hq indiv}
and write 
\begin{equation}
\label{eq:S1 Prelim 1} 
S_1 \le k \sum_{\vm  \in \cB(M)}  \sum_{q=1}^R q^{d-1} \left|\sum_{n=1}^q \eq\(\vm^t A(n) \vb_q\) \right|.
\end{equation}

We now note that with  $q_2, \ldots, q_\Degf $,  defined as in Corollary~\ref{cor:d-power-factor gcd}, 
 we have 
\begin{align*}
\left|\sum_{n=1}^q \eq\(\vm^t A(n) \vb_q\) \right| & \le q^{1+o(1)}    \prod_{i=2}^\Degf  \(\frac{q_i}{\gcd(q_i, \cont_q \(\vm^t A(X) \vb_q\)}\)^{-1/i} \\
& \le \cont_q \(\vm^t A(X) \vb_q\)^{1/2}  q^{1+o(1)}    \prod_{i=2}^\Degf  q_i^{-1/i} 
\end{align*}
and using Lemma~\ref{lem:hq tail}, we obtain
\begin{equation}
\label{eq:ExpSumBound} 
\left|\sum_{n=1}^q \eq\(\vm^t A(n) \vb_q\) \right|  \le (HM)^{d/2}  q^{1+o(1)}  \prod_{i=2}^\Degf  q_i^{-1/i} . 
\end{equation}

We now define the sets 
$$
\cG_\nu(x) =  \cF_\nu(x) \backslash  \cF_\nu(x/2)
$$
where, as in Section~\ref{sec:arith estim},  $\cF_\nu(x)$ denotes the set of  $\nu$-th power full positive integers $n \le x$.

Then using the dyadic partition  of the whole domain of possible values 
of $q_2, \ldots, q_\Degf $ as in Corollary~\ref{cor:d-power-factor gcd} in~\eqref{eq:S1 Prelim 1}.
That is, we fix a family $\fQ$ of  at most $O\(\(\log R\)^{e-1}\)$ vectors of real parameters $\(Q_2,  \ldots, Q_\Degf \)$ with 
\begin{equation}
\label{eq:Q2..Qe} 
Q_2\ldots Q_\Degf  \le R
\end{equation}
 and cover the whole range where  $q_2, \ldots, q_\Degf $ 
may vary, by rectangular boxes with 
$$
q_2 \sim Q_2, \ldots,  q_\Degf \sim  Q_\Degf
$$
for $\(Q_2,  \ldots, Q_\Degf \)\in \fQ$.
Recalling the arithmetic structure of $q_2, \ldots,  q_\Degf $ 
we see that in fact 
$$
q_3\in  \cG_3\(Q_3\), \ldots,  q_\Degf \in  \cG_\Degf\( Q_\Degf \).
$$
We now  see that there are some real numbers 
$Q_2,  \ldots, Q_\Degf $ with~\eqref{eq:Q2..Qe} for which  we have 
\begin{align*}
S_1 \ll k \(\log R\)^{e-1} \sum_{\vm  \in \cB(M)}  & \, 
\ssum_{\substack{q_2 \sim Q_2 \\
q_h \in \cG_h(Q_h) , \ 3\le h \le \Degf  \\
\gcd(q_i, q_j) =1, \ 2 \le i < j \le \Degf }}  \(q_2\ldots q_\Degf  \)^{d-1} \\
&\qquad \qquad \quad   \left|\sum_{n=1}^{q_2\ldots q_\Degf } 
\e_{q_2\ldots q_\Degf }\(\vm^t A(n) \vb_{q_2\ldots q_\Degf }\) \right|.
\end{align*}

Hence, recalling~\eqref{eq:Q2..Qe} and applying the bound~\eqref{eq:ExpSumBound},  we obtain 
$$
S_1  \le k  H^{d/2} M^{3d/2} R^{d+o(1)} Q_2^{1/2} \prod_{i=3}^\Degf  \( \# \cG_{i}(Q_i) Q_i^{-1/i}\). 
$$
Finally, using the bound~\eqref{eq:i-full}, we obtain
\begin{equation}
\label{eq:bound S1} 
S_1 \le k  H^{d/2} M^{3d/2} R^{d+o(1)} Q_2^{1/2} \le k  H^{d/2} M^{3d/2} R^{d+1/2+ o(1)} .
\end{equation}

\subsection{Bound on $S_2$} 
To estimate $S_2$ we use the bound of Corollary~\ref{cor:d-power-factor gcd}
in the following crude form 
$|S_{\Degf ,q}(\vf)| \le q^{1-1/\Degf +o(1)}  s^{1/\Degf } $
(which is exactly the bound used in~\cite{BuFi}). 
However it is technically more convenient (but does not affect the final result) to use a slightly more
precise bound 
\begin{equation}
\label{eq:bound Hua} 
S_{\Degf ,q}(\vf) \ll q^{1-1/\Degf }  s^{1/\Degf } 
\end{equation}
without $o(1)$ in the exponent, see~\cite{DiQi,Stech} for explicit evaluations of the implied constant. 

The bound~\eqref{eq:bound Hua}, together with~\eqref{eq:sum hq}   and Lemma~\ref{lem:hq tail} 
implies 
\begin{equation}
\label{eq:bound S2} 
S_2\le (HM)^{1/\Degf}   \sum_{\vm  \in \cB(M)}  \sum_{q=R+1}^\infty h_q q^{-1/\Degf} 
\ll k^2 H^{1/\Degf} M^{d+1/\Degf }  R^{-1/\Degf }. 
\end{equation}

 \subsection{Concluding the proof} 

Substituting the bounds~\eqref{eq:bound S1} and~\eqref{eq:bound S2} in~\eqref{eq:S1 S2}, 
and recalling the value of $M$ in Lemma~\ref{lem:Bad Sets}, 
we obtain
\begin{equation}
\label{eq:bound k} 
k^2 \ll k H^{d/2}  \varepsilon^{ -5d/2} R^{d+1/2+ o(1)} +   k^2 H^{1/\Degf }  \varepsilon^{ -2d-1/\Degf }   R^{-1/\Degf }
\end{equation}
(clearly the term  $\varepsilon^{-d}M^d k$ can be absorbed in the first term). 
Taking 
\begin{equation}
\label{eq:R opt} 
R = C H   \varepsilon^{ -2d\Degf -1} 
\end{equation}
for a sufficiently large constant $C$ (which depends only on $d$ and $\Degf $), we see that 
the contribution from the second term in~\eqref{eq:bound k} (together with the implied 
constant) does not exceed $k^2/2$. Hence, for the choice of $R$ as in~\eqref{eq:R opt}  we have 
\begin{align*}
k^2 & \le k H^{d/2}  \varepsilon^{ -5d/2} R^{d+1/2+ o(1)} \\
& = k  H^{(3d+1)/2+o(1)}  \varepsilon^{-2d^2 \Degf  - d\Degf -7d/2 -1/2+o(1)}, 
\end{align*}
which concludes the proof. 

\section*{Acknowledgement} 

 The author is grateful to Kamil Bulinski and Alexander Fish for numerous conversations 
 and patient explanations, which have led to this work. Special thanks go to Alexander Fish for
 the very careful reading of the manuscript which helped to eliminate various imprecisions 
 in the initial version. 

This work was  supported   by ARC Grant~DP170100786.

 \end{document}